     \def\section{\@startsection{section}{1}%
     \z@{.7\linespacing\@plus\linespacing}{.5\linespacing}%
     {\bfseries
     \centering
     }}
     \def\@secnumfont{\bfseries}
\newtheorem{theorem}{Theorem}[section]
\newtheorem{proposition}[theorem]{Proposition}
\theoremstyle{definition}
\theoremstyle{remark}
\newtheorem{remark}[theorem]{Remark}
\numberwithin{equation}{section}
\begin{document}

\title[short title for running heading]{An extension of bifractional
Brownian motion}

\author{Xavier Bardina*}
\address{Xavier Bardina: Departament de Matem\`atiques, Facultat de Ci\`encies, Universitat Aut\`onoma de Barcelona, 08193-Bellaterra, Barcelona, Spain}
\email{bardina@mat.uab.cat}
\urladdr{http://mat.uab.cat/$\sim$bardina}

\author[Khalifa Es-Sebaiy]{Khalifa Es-Sebaiy}
\thanks{* This author is supported by the grant MTM2009-08869 from the Ministerio de Ciencia e Innovaci\'on}
\address{Khalifa Es-Sebaiy: SAMM, Centre d'Economie de La Sorbonne,
Universit\'e Paris 1 Panth\'eon-Sorbonne,
90, rue de Tolbiac, 75634, Paris, France}
\email{Khalifa.Es-Sebaiy@univ-paris1.fr}

\subjclass[2000] {Primary 60G15; Secondary 60G18}

\keywords{Bifractional Brownian motion, self-similar processes, long-range dependence}

\begin{abstract}
In this paper we introduce and study   a self-similar Gaussian
process that is  the bifractional Brownian motion $B^{H,K}$ with
parameters $H\in~(0,1)$ and $K\in(1,2)$ such that $HK\in(0,1)$. A
remarkable difference between the case $K\in(0,1)$ and our situation
is that this process is a semimartingale when $2HK=1$.
\end{abstract}

\maketitle

\section{Introduction}

Houdr\'e and Villa in \cite{HV} gave the first introduction to the
bifractional Brownian motion (bifBm)
$B^{H,K}=\left(B^{H,K}_t;t\geq0\right)$ with parameters $H\in(0,1)$
and $K\in(0,1]$  which is defined as a centered Gaussian process,
with covariance function
\begin{eqnarray*}R^{H,K}(t,s)=E\left(B^{H,K}_tB^{H,K}_s\right)
=\frac{1}{2^K}\left(\left(t^{2H}+s^{2H}\right)^K-|t-s|^{2HK}\right),
\end{eqnarray*}for every $ s,\ t\geq0$.\\
The case $K=1$  corresponds to the fractional Brownian motion (fBm)
with Hurst parameter $H$. Some properties of the bifractional
Brownian motion have been studied by Russo and Tudor in \cite{RT}.
In fact, in \cite{RT} it is shown that the bifractional Brownian
motion behaves as a fractional Brownian motion with Hurst parameter
$HK$. A stochastic calculus with respect to this process has been
recently developed by Kruk, Russo and Tudor \cite{KRT} and Es-Sebaiy
and Tudor \cite{ET}.

In this paper we prove that, with $H\in(0,1)$ and $HK\in(0,1)$, the
process $B^{H,K}$ can be extended for $1<K<2$. The case $H=\frac12$
and $1<K<2$ plays a role to give an extension of sub-fractional
Brownian motion (subfBm) (see \cite{BGT1}). The subfBm
$(\xi^h_t,t\geq0)$ with parameter $0<h\leq2$ is a centered Gaussian
process with covariance:
\[E\left(\xi^h_t\xi^h_s\right)=C_h\left(t^{2h}+s^{2h}-\frac12\left((t+s)^{2h}+|t-s|^{2h}\right)\right);\quad s,\ t\geq0\]
where $C_h=1 $ if $0<h<1$ and $C_h=2(1-h) $ if $1<h\leq2$.

\section{Definition of bifractional Brownian motion with parameter $K\in(1,2)$}

For any $K\in (0,2)$, let $X^K=(X_t^K, t\geq0)$ be a Gaussian
process defined by
\begin{equation}\label{e:david}
X_t^K=\int_0^{\infty}(1-e^{-rt})r^{-\frac{1+K}{2}}dW_r,\quad
t\geq0\end{equation} where $(W_t,t\geq0)$ is a standard Brownian
motion.

This process was introduced in \cite{LN} for $K\in(0,1)$ in order to
obtain a decomposition of the bifractional Brownian motion with
$H\in(0,1)$ and $K\in(0,1)$. More precisely, they prove the
following result:
\begin{theorem}[see \cite{LN}]\label{leinu}
Let $B^{H,K}$ a bifractional Brownian motion with parameters
$H\in(0,1)$ and $K\in(0,1)$,  $B^{HK}$ be a fractional Brownian
motion with Hurst parameter $HK\in(0,1)$ and $W=\{W_t,t\geq0\}$ a
standard Brownian motion. Let $X^K$ be the process given by
(\ref{e:david}). If we suppose that $B^{H,K}$ and $W$ are
independents, then the processes
$\{Y_t=C_1X_{t^{2H}}^K+B_t^{H,K},t\geq0\}$ and
$\{C_2B_t^{HK},t\geq0\}$ have the same distribution, where
$C_1=\sqrt{\frac{2^{-K}K}{\Gamma(1-K)}}$ and $C_2=2^{\frac{1-K}2}$.
\end{theorem}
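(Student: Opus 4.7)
The plan is to reduce the equality in distribution to an equality of covariance functions, since both sides of the claim are centered Gaussian processes. The left-hand side is Gaussian because $B^{H,K}$, being centered Gaussian, is independent of the Wiener integral $X^K_{t^{2H}}$; the right-hand side is a scaled fBm. Thus the whole theorem follows from verifying that
\begin{equation*}
C_1^2\,\mathrm{Cov}(X^K_{t^{2H}},X^K_{s^{2H}})+R^{H,K}(t,s)=C_2^2\cdot \tfrac{1}{2}\bigl(t^{2HK}+s^{2HK}-|t-s|^{2HK}\bigr).
\end{equation*}

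The main computation is the covariance of $X^K$. By the It\^o isometry,
\begin{equation*}
\mathrm{Cov}(X^K_u,X^K_v)=\int_0^\infty (1-e^{-ru})(1-e^{-rv})r^{-(1+K)}\,dr.
\end{equation*}
Expanding the product and rewriting as $(1-e^{-ru})+(1-e^{-rv})-(1-e^{-r(u+v)})$ splits the integral into three pieces, each of the form $\int_0^\infty(1-e^{-r\alpha})r^{-(1+K)}\,dr$. The key auxiliary identity, obtained by a single integration by parts (differentiating $1-e^{-r\alpha}$ against the antiderivative $-r^{-K}/K$ of $r^{-(1+K)}$), is
\begin{equation*}
\int_0^\infty(1-e^{-r\alpha})\,r^{-(1+K)}\,dr=\frac{\Gamma(1-K)}{K}\,\alpha^K,\qquad 0<K<1.
\end{equation*}
This yields $\mathrm{Cov}(X^K_u,X^K_v)=\frac{\Gamma(1-K)}{K}\bigl(u^K+v^K-(u+v)^K\bigr)$.

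Setting $u=t^{2H}$, $v=s^{2H}$ and multiplying by $C_1^2=2^{-K}K/\Gamma(1-K)$ gives
\begin{equation*}
C_1^2\,\mathrm{Cov}(X^K_{t^{2H}},X^K_{s^{2H}})=2^{-K}\bigl(t^{2HK}+s^{2HK}-(t^{2H}+s^{2H})^K\bigr).
\end{equation*}
Adding $R^{H,K}(t,s)=2^{-K}\bigl((t^{2H}+s^{2H})^K-|t-s|^{2HK}\bigr)$ makes the $(t^{2H}+s^{2H})^K$ terms cancel, leaving exactly $2^{-K}\bigl(t^{2HK}+s^{2HK}-|t-s|^{2HK}\bigr)=C_2^2\cdot\tfrac12\bigl(t^{2HK}+s^{2HK}-|t-s|^{2HK}\bigr)$, which is the covariance of $C_2B^{HK}$. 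Hence the finite-dimensional distributions match.

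The proof is essentially a bookkeeping exercise once the covariance of $X^K$ is in hand; the only real step is the integral identity above, whose validity relies on $K\in(0,1)$ so that $\Gamma(1-K)$ is finite and the integrand is integrable at both endpoints (subtraction of the singular parts being crucial at $r=0$). No deeper obstacle is expected, but one must be careful to justify splitting the integral: the individual pieces $\int(1-e^{-r\alpha})r^{-(1+K)}dr$ each converge, so the splitting is legal and the constants $C_1,C_2$ are precisely tuned to produce the cancellation.
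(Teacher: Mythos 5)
Your proof is correct and follows essentially the same route as the paper: since both sides are centered Gaussian processes, the equality in law reduces to matching covariance functions, which is exactly how the paper argues its analogous decomposition in Theorem \ref{bk} (the statement itself is only quoted from \cite{LN} here). The one extra ingredient you supply — the It\^o-isometry computation giving $\mathrm{Cov}(X^K_u,X^K_v)=\tfrac{\Gamma(1-K)}{K}\bigl(u^K+v^K-(u+v)^K\bigr)$ for $K\in(0,1)$ — is correct (the splitting and the integration by parts are justified as you say) and corresponds to the formula the paper simply cites as Proposition \ref{t:covX}.
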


The process defined in (\ref{e:david}) has good properties. The
following result is proved in \cite{LN} for the case $K\in(0,1)$ and
extended to the case $K\in(1,2)$ in \cite{BB} and \cite{RCT}:

\begin{proposition}[see \cite{BB},\cite{LN} and \cite{RCT}]\label{t:covX} The process
$X^{K}=\{X_t^{K},t\geq0\}$ is Gaussian, centered, and its covariance
function is:
\begin{equation}\label{e:david_cov}
\mathrm{Cov}(X_t^{K},X_s^{K})=\left\{
\begin{array}{ll}
\tfrac{\Gamma(1-K)}{K}\left[t^{K}+s^{K}-(t+s)^{K}\right] & \!\!\!\!\!\! \mbox{ if ${K}\in (0,1)$,} \\
\tfrac{\Gamma(2-K)}{K(K-1)}\left[(t+s)^{K}-t^{K}-s^{K}\right] &
\!\!\!\!\!\! \mbox{ if ${K}\in (1,2)$}.
\end{array}
\right.
\end{equation}
Moreover, $X^{K}$ has a version with trajectories which are
infinitely differentiable on $(0,\infty)$ and absolutely continuous
on $[0,\infty)$.
\end{proposition}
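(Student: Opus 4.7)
My overall strategy is to verify each of the four claims (well-definedness plus Gaussianity, the two covariance formulas, and the path regularity) in turn, relying on the Wiener integral isometry and on standard Laplace-type identities.

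First I would check that the integrand in (\ref{e:david}) lies in $L^2((0,\infty),dr)$ for every fixed $t\geq 0$: near $r=0$ we have $(1-e^{-rt})^2 r^{-1-K}\sim t^{2}r^{1-K}$, integrable precisely because $K<2$, while near $r=\infty$ the integrand behaves like $r^{-1-K}$, integrable because $K>0$. So $X_t^{K}$ is a well-defined Wiener integral; Gaussianity and centering are automatic.

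For the covariance in the range $K\in(0,1)$, the Wiener isometry gives $\mathrm{Cov}(X_t^K,X_s^K)=\int_0^\infty(1-e^{-rt})(1-e^{-rs})r^{-1-K}dr$. I would expand via the identity $(1-e^{-rt})(1-e^{-rs})=(1-e^{-rt})+(1-e^{-rs})-(1-e^{-r(t+s)})$, which is legitimate when $K<1$ because each piece is separately integrable against $r^{-1-K}dr$, and then apply the elementary formula $\int_0^\infty(1-e^{-ra})r^{-1-K}dr=\frac{\Gamma(1-K)}{K}a^{K}$ (proved by one integration by parts) to each summand.

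The case $K\in(1,2)$ is the main obstacle, since the term-by-term splitting just used diverges at $r=0$. I would circumvent this by differentiating $g(t,s):=\mathrm{Cov}(X_t^K,X_s^K)$ twice under the integral sign, obtaining
\begin{equation*}
\partial_t\partial_s g(t,s)=\int_0^\infty r^{2}e^{-r(t+s)}r^{-1-K}dr=\int_0^\infty e^{-r(t+s)}r^{1-K}dr=\Gamma(2-K)(t+s)^{K-2},
\end{equation*}
where differentiation under the integral is justified by dominated convergence on compact subsets of $(0,\infty)^{2}$ since $K<2$. Then I would integrate back twice, using the boundary conditions $g(t,0)=g(0,s)=0$ (consequence of $X_0^{K}=0$), and collect the constants $\Gamma(2-K)/[K(K-1)]$ to recover the stated expression.

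For the regularity statement, formal differentiation in $t$ inside (\ref{e:david}) yields, for $n\geq 1$,
\begin{equation*}
(X_t^K)^{(n)}=(-1)^{n+1}\int_0^\infty e^{-rt}r^{n-(1+K)/2}dW_r,
\end{equation*}
and the integrand is in $L^{2}(dr)$ for every $t>0$ because $2n-1-K>-1$ at $r=0$ and the exponential factor handles $r=\infty$. I would upgrade this to an almost-sure smooth modification by applying Kolmogorov's continuity criterion to each derivative process (moment bounds are Gaussian and easy) and then gluing the modifications via a standard diagonal argument. Finally, to obtain absolute continuity on $[0,\infty)$ I would set $Y_u:=(X_u^K)'$ and compute $\mathbb{E}|Y_u|^{2}=c\, \Gamma(2-K)u^{K-2}$, whence $\mathbb{E}|Y_u|\lesssim u^{(K-2)/2}$, an integrable power near $0$ precisely because $K>0$. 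By Fubini, $\int_0^{t}|Y_u|du<\infty$ a.s., and combined with $L^{2}$-continuity of $X^{K}$ at $0$ this identifies $X_t^K=\int_0^t Y_u\,du$ on $[0,\infty)$. The one genuinely delicate point is the covariance computation when $K\in(1,2)$, where the naive direct evaluation diverges and the differentiate-then-reintegrate route is needed.
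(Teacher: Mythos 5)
Your proposal is essentially correct, but note that the paper itself contains no proof of this proposition: it is quoted from the references \cite{LN} (case $K\in(0,1)$) and \cite{BB}, \cite{RCT} (case $K\in(1,2)$), so the comparison is with the literature rather than with anything in the text. Your treatment of $K\in(0,1)$ (splitting $(1-e^{-rt})(1-e^{-rs})$ into three separately integrable pieces and using $\int_0^\infty(1-e^{-ra})r^{-1-K}dr=\tfrac{\Gamma(1-K)}{K}a^{K}$) is the standard argument. For $K\in(1,2)$ your differentiate-then-reintegrate route is valid: $\partial_t\partial_s g(t,s)=\Gamma(2-K)(t+s)^{K-2}$ is nonnegative and locally integrable up to the axes, $g$ is continuous on $[0,\infty)^2$ and vanishes on them, so $g(t,s)=\int_0^t\!\int_0^s\Gamma(2-K)(u+v)^{K-2}\,dv\,du=\tfrac{\Gamma(2-K)}{K(K-1)}\left[(t+s)^{K}-t^{K}-s^{K}\right]$; you should make the limiting step explicit (write $g(t,s)-g(t,\epsilon)-g(\epsilon,s)+g(\epsilon,\epsilon)=\int_\epsilon^t\!\int_\epsilon^s\partial_u\partial_v g\,dv\,du$ and let $\epsilon\downarrow0$). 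A more direct computation, closer to what the cited papers do, is to note that the linear terms cancel in $(1-e^{-rt})(1-e^{-rs})=\left(e^{-r(t+s)}-1+r(t+s)\right)-\left(e^{-rt}-1+rt\right)-\left(e^{-rs}-1+rs\right)$ and to use the identity $\int_0^\infty\left(e^{-ra}-1+ra\right)r^{-1-K}dr=\tfrac{\Gamma(2-K)}{K(K-1)}a^{K}$, valid precisely for $K\in(1,2)$; this avoids differentiation under the integral altogether. On the regularity statement your computations are right, but the plan ``Kolmogorov for each derivative process plus gluing'' does not by itself identify the continuous version of $Y_u=\int_0^\infty e^{-ru}r^{\frac{1-K}{2}}dW_r$ as the pathwise derivative of $X^{K}$; the clean tool (and the one you implicitly need at the end anyway) is the stochastic Fubini theorem, which gives $X_t^{K}-X_s^{K}=\int_s^t Y_u\,du$ for $0<s<t$ directly, after which $\int_0^t E|Y_u|\,du\le c\int_0^t u^{\frac{K-2}{2}}du<\infty$ lets you send $s\downarrow0$ and yields both smoothness on $(0,\infty)$ (iterating the same identity for higher derivatives) and absolute continuity on $[0,\infty)$. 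These are presentation-level tightenings; the mathematical content of your proposal is sound.
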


Using the fact that when $K\in(1,2)$, the covariance function of
$X^K$ is given by
$$Cov(X_t^K,X_s^K)=\frac{\Gamma(2-K)}{K(K-1)}\left((t+s)^K-t^K-s^K\right),$$
and considering also the process
\begin{equation}\label{e:davidmod}X^{H,K}_t=X_{t^{2H}}^K; \ t\geq0,\end{equation} we can prove the
following result:

\begin{theorem}\label{bk}
Assume $H\in(0,1)$ and $K\in(1,2)$ with $HK\in(0,1)$. Let $B^{HK}$
be a fractional Brownian motion, and $W=\{W_t,t\geq0\}$ a standard
Brownian motion. Let $X^{K,H}$ the process defined in
(\ref{e:davidmod}). If we suppose that $B^{HK}$ and $W$ are
independents, then the processes
\begin{eqnarray}\label{decomposition}B^{H,K}_t=aB^{HK}_t+bX^{H,K}_t,
\end{eqnarray}where
$a=\sqrt{2^{1-K}}$ and $b=\sqrt{\frac{K(K-1)}{2^K\Gamma(2-K)}}$ is a
centered Gaussian process with covariance function
\begin{eqnarray*}E\left(B^{H,K}_tB^{H,K}_s\right)=\frac{1}{2^K}\left(\left(t^{2H}+s^{2H}\right)^K-|t-s|^{2HK}\right);\quad s,\ t\geq0.
\end{eqnarray*}
\end{theorem}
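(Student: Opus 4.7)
The plan is to compute the covariance of the Gaussian process on the right-hand side of (\ref{decomposition}) and check that it coincides with the bifractional covariance claimed. Since $B^{HK}$ and $W$ are independent, and $X^{H,K}$ is a deterministic (Wiener-integral) functional of $W$, the processes $B^{HK}$ and $X^{H,K}$ are independent centered Gaussian processes. Thus the linear combination $aB^{HK}+bX^{H,K}$ is automatically a centered Gaussian process, and only its covariance requires attention.

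The first step is to write, for $s,t\geq 0$,
\begin{equation*}
E\left(B^{H,K}_tB^{H,K}_s\right)=a^{2}\,E\!\left(B^{HK}_tB^{HK}_s\right)+b^{2}\,E\!\left(X^{H,K}_tX^{H,K}_s\right),
\end{equation*}
where the cross-term vanishes by independence. The first covariance is the standard fBm one,
$\frac12\bigl(t^{2HK}+s^{2HK}-|t-s|^{2HK}\bigr)$. For the second, I would use (\ref{e:davidmod}) and the $K\in(1,2)$ branch of Proposition \ref{t:covX} with the time-change $t\mapsto t^{2H}$, $s\mapsto s^{2H}$, to obtain
\begin{equation*}
E\!\left(X^{H,K}_tX^{H,K}_s\right)=\frac{\Gamma(2-K)}{K(K-1)}\left[(t^{2H}+s^{2H})^{K}-t^{2HK}-s^{2HK}\right].
\end{equation*}

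The second step is to substitute the explicit constants $a^{2}=2^{1-K}$ and $b^{2}=\frac{K(K-1)}{2^{K}\Gamma(2-K)}$. One checks immediately that $a^{2}/2=2^{-K}$ and $b^{2}\Gamma(2-K)/[K(K-1)]=2^{-K}$, so both summands carry the same prefactor $2^{-K}$. Collecting them, the terms $t^{2HK}$ and $s^{2HK}$ cancel between the two contributions, leaving
\begin{equation*}
E\!\left(B^{H,K}_tB^{H,K}_s\right)=\frac{1}{2^{K}}\left[(t^{2H}+s^{2H})^{K}-|t-s|^{2HK}\right],
\end{equation*}
which is exactly the bifractional covariance.

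There is essentially no hard step: the proof is a matching of constants made possible by Proposition \ref{t:covX}. The one point worth a remark is that the construction requires $K\in(1,2)$ so that $K(K-1)>0$ and $\Gamma(2-K)>0$, ensuring $b$ is a real positive number; this is the structural reason the decomposition in this regime takes an additive form $aB^{HK}+bX^{H,K}$, in contrast to Theorem \ref{leinu} where $X^K$ appears on the opposite side of the identity. One should also observe that since $HK\in(0,1)$, the fBm $B^{HK}$ is a well-defined object, so the hypotheses are consistent.
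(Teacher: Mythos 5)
Your proposal is correct and follows essentially the same route as the paper's own proof: using the independence of $B^{HK}$ and $X^{H,K}$, the fBm covariance, and the $K\in(1,2)$ branch of Proposition \ref{t:covX} under the time change $t\mapsto t^{2H}$, then matching the constants $a^{2}=2^{1-K}$ and $b^{2}=\frac{K(K-1)}{2^{K}\Gamma(2-K)}$ so the $t^{2HK}$ and $s^{2HK}$ terms cancel. Your added remarks on the positivity of $b$ and the consistency of the hypotheses are accurate but not needed beyond the paper's argument.
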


\begin{proof}
It is obvious that the process defined in (\ref{decomposition}) is a
centered Gaussian process. On the other hand, its covariance
functions is given by
\begin{eqnarray*}
E\left(B^{H,K}_tB^{H,K}_s\right)&=&a^2
E\left(B^{HK}_tB^{HK}_s\right)+b^2E\left(X^{H,K}_tX^{H,K}_s\right)\\
&=&\frac1{2^K}\left(t^{2HK}+s^{2HK}-
|t-s|^{2HK}\right)\\&&\qquad\qquad+\frac1{2^K}\left(\left(t^{2H}+s^{2H}\right)^K-
t^{2HK}-s^{2HK}\right)\\
&=&
\frac{1}{2^K}\left(\left(t^{2H}+s^{2H}\right)^K-|t-s|^{2HK}\right),
\end{eqnarray*}
which completes the proof.
\end{proof}

Thus, the bifractional Brownian motion $B^{H,K}$  with parameters
$H\in(0,1)$ and $K\in(1,2)$ such that $HK\in(0,1)$ is well defined
and it has a decomposition as a sum of a fBm $B^{HK}$ and an
absolutely continuous process $X^{H,K}$.

\begin{remark}Assume that $2HK=1$. Russo and Tudor \cite{RT} proved that if
$K$ belong to $(0,1)$, the process $B^{H,K}$ is not a semimartingale. But in the
case when $1<K<2$,  $B^{H,K}$ is  a semimartingale because we have a
decomposition of this process as a sum of a Brownian motion
$B^{\frac{1}{2}}$ and a finite variation process $X^{H,K}$.
\end{remark}

The following decomposition is exploited to  prove the quasi-helix
property (in the sense of J.P. Kahane) of $B^{H,K}$. This result is
satisfied for all $K\in(0,2)$.
\begin{proposition}\label{prop_decomposition}Let $H\in(0,1)$ and $K\in(0,2)$ such that $HK\in(0,1)$. Let $(\xi_t^{K/2}, t\geq0)$ be a sub-fractional
 Brownian motion  with  parameter $K/2\in(0,1)$, independent to $B^{H,K}$ and suppose that $(B_t^{K/2}, t\geq0)$ and $(B_t^{HK}, t\geq0)$ are two independent fractional Brownian motions
  with Hurst parameter
  $K/2\in(0,1)$ and   $HK\in(0,1)$, respectively. We set $\xi_t^{K,H}=\xi_{t^{2H}}^{K/2}$ and $\widetilde{B}^{H,K}_t=B_{t^{2H}}^{K/2}$, $t\geq0$.  Then,
   it holds that
\begin{eqnarray}B^{H,K}+\sqrt{2^{1-K}}\xi^{K,H}\overset{(d)}{=}\sqrt{2^{1-K}}\left(\widetilde{B}^{H,K}+B^{HK}\right)
\end{eqnarray}where $\stackrel{d}{=}$ denotes that both processes have the same distribution.
\end{proposition}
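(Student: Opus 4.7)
The plan is to observe that both sides of the claimed identity are centered Gaussian processes: the left-hand side because $B^{H,K}$ and $\xi^{K,H}$ are independent centered Gaussian processes, and the right-hand side because $\widetilde{B}^{H,K}$ and $B^{HK}$ are independent centered Gaussian processes. Hence it suffices to verify that the two sides have the same covariance function, which reduces the proposition to a direct computation using the covariance formulas available from the preceding parts of the paper.

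The calculation will use three inputs: the bifractional covariance $\tfrac{1}{2^K}\bigl((t^{2H}+s^{2H})^K-|t-s|^{2HK}\bigr)$ established in Theorem~\ref{bk}, the standard fractional Brownian covariance $\tfrac12(t^{2HK}+s^{2HK}-|t-s|^{2HK})$ for $B^{HK}$, and the subfractional Brownian covariance with parameter $h=K/2\in(0,1)$, namely $u^K+v^K-\tfrac12((u+v)^K+|u-v|^K)$ (note $C_{K/2}=1$ in this range). The covariances of $\xi^{K,H}$ and of $\widetilde{B}^{H,K}$ then follow by inserting $u=t^{2H}$ and $v=s^{2H}$, which produces terms involving $t^{2HK}$, $s^{2HK}$, $(t^{2H}+s^{2H})^K$, and $|t^{2H}-s^{2H}|^K$.

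First I would expand the covariance of the left-hand side as $E[B^{H,K}_tB^{H,K}_s]+2^{1-K}\,E[\xi^{K,H}_t\xi^{K,H}_s]$. The key observation is that the term $(t^{2H}+s^{2H})^K$ appearing in the bifractional covariance (with prefactor $2^{-K}$) and the same term coming from the subfractional covariance (with prefactor $-\tfrac12\cdot 2^{1-K}=-2^{-K}$) cancel exactly. What remains is $2^{-K}\bigl[2t^{2HK}+2s^{2HK}-|t-s|^{2HK}-|t^{2H}-s^{2H}|^K\bigr]$. Next I would compute the covariance of the right-hand side as $2^{1-K}\bigl(E[\widetilde{B}^{H,K}_t\widetilde{B}^{H,K}_s]+E[B^{HK}_tB^{HK}_s]\bigr)$, which gives $2^{1-K}\bigl(t^{2HK}+s^{2HK}-\tfrac12|t^{2H}-s^{2H}|^K-\tfrac12|t-s|^{2HK}\bigr)$, and this simplifies to the identical expression.

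The main obstacle is really just a careful bookkeeping: one must pair the constants $2^{1-K}$ on both sides with the $\tfrac{1}{2^K}$ factors from the bifractional formula and the $\tfrac12$ factor appearing in the fractional and subfractional covariance conventions, and verify that the mixed cross-term $(t^{2H}+s^{2H})^K$ (which appears in the bifractional covariance but not on the right-hand side) is precisely annihilated by the corresponding contribution of the subfractional term on the left-hand side. Once this cancellation is observed, the remainder of the computation is algebraic and the equality of covariance functions is immediate, finishing the proof.
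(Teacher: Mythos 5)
Your proposal is correct and follows essentially the same route as the paper: both sides are sums of independent centered Gaussian processes, so one only needs the covariance identity $R^{H,K}(t,s)=2^{1-K}\bigl[-\mathrm{Cov}(\xi^{K,H}_t,\xi^{K,H}_s)+\mathrm{Cov}(\widetilde{B}^{H,K}_t,\widetilde{B}^{H,K}_s)+\mathrm{Cov}(B^{HK}_t,B^{HK}_s)\bigr]$, which is exactly the cancellation of the $(t^{2H}+s^{2H})^K$ terms you carry out. Your write-up merely makes explicit the algebra that the paper states without detail.
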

\begin{proof} The result follows easily from the independence  and
the fact that their  corresponding covariance functions satisfy the
following equality for all $s,t\geq0$
\begin{eqnarray*}R^{H,K}(t,s)&=&\frac{1}{2^K}\left((t^{2H}+s^{2H})^K-|t-s|^{2HK}\right)\\&=&
2^{1-K}\left[-Cov(\xi^{K,H}_t,\xi^{K,H}_s)
+Cov(\widetilde{B}^{H,K}_t,\widetilde{B}^{H,K}_s)\right.
\\&&\qquad\qquad\qquad\qquad\qquad\qquad\left.+Cov(B_t^{HK},B_s^{HK})\right].
\end{eqnarray*}
\end{proof}

\begin{proposition}
Let $H\in(0,1)$ and $K\in(1,2)$ such that $HK\in(0,1)$. Then for any
$t,s\geq0$,\\ if $0<H\leq1/2$
$$2^{1-K}|t-s|^{2HK}\leq
E\left(B_t^{H,K}-B_s^{H,K}\right)^2\leq |t-s|^{2HK},$$ and if
$1/2<H<1$ $$2^{1-K}|t-s|^{2HK}\leq
E\left(B_t^{H,K}-B_s^{H,K}\right)^2\leq 2^{2-K}|t-s|^{2HK}.$$
\end{proposition}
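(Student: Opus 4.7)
The plan is to compute $E(B_t^{H,K}-B_s^{H,K})^2$ directly from the covariance formula in Theorem~\ref{bk}, obtaining the identity
\[
E\bigl(B_t^{H,K}-B_s^{H,K}\bigr)^2 = t^{2HK}+s^{2HK}+2^{1-K}|t-s|^{2HK}-2^{1-K}(t^{2H}+s^{2H})^K,
\]
and then control the last term on the right from above and below. The lower bound (valid in both regimes of $H$) will come from convexity of $x\mapsto x^K$ on $[0,\infty)$, which, since $K>1$, yields $(t^{2H}+s^{2H})^K\leq 2^{K-1}(t^{2HK}+s^{2HK})$; inserting this into the identity, the $t^{2HK}+s^{2HK}$ contributions cancel and what remains is exactly $E(B_t^{H,K}-B_s^{H,K})^2 \geq 2^{1-K}|t-s|^{2HK}$.

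For the upper bound I would use the decomposition $B^{H,K}=aB^{HK}+bX^{H,K}$ of Theorem~\ref{bk}. Independence of $B^{HK}$ and $W$ gives
\[
E\bigl(B_t^{H,K}-B_s^{H,K}\bigr)^2 = 2^{1-K}|t-s|^{2HK}+b^2\,E\bigl(X_t^{H,K}-X_s^{H,K}\bigr)^2,
\]
so the entire problem reduces to estimating the variance of the increment of the auxiliary process $X^{H,K}$. From the Wiener integral representation \eqref{e:david} one has $E(X_t^{H,K}-X_s^{H,K})^2 = \int_0^\infty (e^{-rs^{2H}}-e^{-rt^{2H}})^2 r^{-(1+K)}dr$; after the change of variables $\rho=r(t^{2H}-s^{2H})$ and factoring out $e^{-2rs^{2H}}=e^{-2\sigma\rho}$ with $\sigma=s^{2H}/(t^{2H}-s^{2H})$, this integral equals $(t^{2H}-s^{2H})^K I(\sigma)$, where $I(\sigma)=\int_0^\infty e^{-2\sigma\rho}(1-e^{-\rho})^2\rho^{-1-K}d\rho$ is decreasing in $\sigma$. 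Evaluating $I(0)$ with the same type of computation that produces \eqref{e:david_cov} leads to the universal estimate
\[
b^2\,E\bigl(X_t^{H,K}-X_s^{H,K}\bigr)^2 \leq (1-2^{1-K})(t^{2H}-s^{2H})^K.
\]

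When $H\leq 1/2$ the map $x\mapsto x^{2H}$ is subadditive, so $t^{2H}-s^{2H}\leq (t-s)^{2H}$ for $t\geq s\geq 0$; raising to the $K$-th power and substituting yields the sharp upper bound $|t-s|^{2HK}$. The case $H>1/2$ is the main obstacle: subadditivity is reversed, and $t^{2H}-s^{2H}$ can substantially exceed $(t-s)^{2H}$ when $s$ is close to $t$, so the estimate $I(\sigma)\leq I(0)$ is too crude. My plan for this regime is to take advantage of the exponential damping inside $I(\sigma)$ for large $\sigma$ --- using either the complementary bound $I(\sigma)\leq \Gamma(2-K)(2\sigma)^{K-2}$ coming from $(1-e^{-\rho})^2\leq \rho^2$, or, equivalently, a Cauchy--Schwarz estimate on the absolutely continuous trajectories of $X^{H,K}$ (Proposition~\ref{t:covX}) via the derivative variance $E((X^{H,K})'(r))^2 = H^2 2^K\Gamma(2-K)r^{2HK-2}$ --- and then patch this with the $I(0)$ bound near $\sigma=0$ so as to cover the full range of $\sigma$, producing the constant $2^{2-K}$ claimed in the statement.
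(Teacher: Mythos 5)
Your treatment of the lower bound and of the upper bound for $H\le 1/2$ is correct, and in fact cleaner than the paper's: the identity $E\left(B_t^{H,K}-B_s^{H,K}\right)^2=t^{2HK}+s^{2HK}+2^{1-K}|t-s|^{2HK}-2^{1-K}(t^{2H}+s^{2H})^K$ is right, convexity of $x\mapsto x^K$ does give the lower bound in one line, and the reduction via the decomposition of Theorem~\ref{bk} together with $b^2E\left(X_t^{H,K}-X_s^{H,K}\right)^2\le(1-2^{1-K})(t^{2H}-s^{2H})^K$ (your $I(0)$ computation checks out against \eqref{e:david_cov}) plus subadditivity of $x\mapsto x^{2H}$ settles $H\le1/2$. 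The paper instead routes both bounds through the sub-fractional decomposition of Proposition~\ref{prop_decomposition} and the known two-sided estimate for subfBm increments; the two arguments are essentially equivalent in strength for these parts.

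The genuine gap is the case $1/2<H<1$, which is exactly where the real work of the proposition lies, and where your text offers only a plan rather than a proof. You correctly identify that $I(\sigma)\le I(0)$ fails near $\sigma=\infty$ (i.e.\ $s$ close to $t$) and that the complementary bound $I(\sigma)\le\Gamma(2-K)(2\sigma)^{K-2}$ has the right asymptotics there (it reduces the target to $\frac{K(K-1)}{4}u^{2H(K-2)}(1-u^{2H})^2\le 2^{1-K}(1-u)^{2HK}$ with $u=s/t$, whose exponents match because $2>2HK$). But the claim that patching the two bounds "produces the constant $2^{2-K}$" is precisely what needs to be proved: one must show that for \emph{every} admissible pair $(H,K)$ with $H>1/2$ the minimum of the two crude bounds stays below $2^{1-K}(1-u)^{2HK}/(1-u^{2H})^K b^{-2}$ on all of $u\in(0,1)$, and numerically the margins in the intermediate range are thin (e.g.\ for $K$ near $2$ and $H$ near $1/2$ the $I(0)$ bound already fails around $u\approx0.7$ while the $\sigma^{K-2}$ bound passes only by a few percent). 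Nothing you wrote rules out a parameter range where both bounds fail simultaneously, so the constant $2^{2-K}$ is not established. The paper closes this case by a different, self-contained argument: it reduces the inequality to the positivity of $f(u)=2^{1-K}\left[(1+u^{2H})^K+(1-u)^{2HK}\right]-u^{2HK}-1$ on $(0,1]$ and proves $f\ge0$ by showing $f(1)=0$, $f'\le0$ via a second auxiliary function $h$ with $h(1)=0$ and $h'\ge0$ (using $2H-1\ge0$). You would need either to carry out your patching argument uniformly in $(H,K)$ with explicit constants, or to substitute a monotonicity argument of this kind.
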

\begin{proof} Using the proposition \ref{prop_decomposition}, we
obtain
\begin{eqnarray*}&&E\left(B^{H,K}_{t}-B^{H,K}_{s}\right)^2\\&=&2^{1-K}\left(-E\left(\xi^{\frac{K}{2}}_{t^{2H}}-\xi^{\frac{K}{2}}_{s^{2H}}\right)^2
+E\left(B^{\frac{K}{2}}_{t^{2H}}-B^{\frac{K}{2}}_{s^{2H}}\right)^2+E\left(B^{HK}_{t}-B^{HK}_{s}\right)^2\right)\\&=&
2^{1-K}\left(-E\left(\xi^{\frac{K}{2}}_{t^{2H}}-\xi^{\frac{K}{2}}_{s^{2H}}\right)^2
+\left|t^{2H}-s^{2H}\right|^K+\left|t-s\right|^{2HK}\right).
\end{eqnarray*}
On the other hand, from \cite{BGT} we have
\begin{eqnarray*}(2-2^{K-1})\left|t^{2H}-s^{2H}\right|^K \leq
E\left(\xi^{\frac{K}{2}}_{t^{2H}}-\xi^{\frac{K}{2}}_{s^{2H}}\right)^2\leq
\left|t^{2H}-s^{2H}\right|^K.
\end{eqnarray*}
Thus
\begin{eqnarray*}2^{1-K}\left|t-s\right|^{2HK}&\leq&
E\left(B^{H,K}_{t}-B^{H,K}_{s}\right)^2\\&\leq&
2^{1-K}\left(\left|t-s\right|^{2HK}+
(2^{K-1}-1)\left|t^{2H}-s^{2H}\right|^K\right).
\end{eqnarray*}
Then we deduce that for every $H\in(0,1)$, $K\in(1,2)$ with
$HK\in(0,1)$ \begin{eqnarray*}2^{1-K}\left|t-s\right|^{2HK}\leq
E\left(B^{H,K}_{t}-B^{H,K}_{s}\right)^2\end{eqnarray*} and the other
hand for every $H\in(0,\frac12]$, $K\in(1,2)$ we have
\begin{eqnarray*}
E\left(B^{H,K}_{t}-B^{H,K}_{s}\right)^2&\leq&
2^{1-K}\left(\left|t-s\right|^{2HK}+
(2^{K-1}-1)\left|t^{2H}-s^{2H}\right|^K\right)\\&\leq&\left|t-s\right|^{2HK}.
\end{eqnarray*}The last  inequality is satisfied from the fact that
$\left|t^{2H}-s^{2H}\right|\leq\left|t-s\right|^{2H}$ for
$H\in~(0,\frac12]$.\\
To complete the proof,  it remains to show that for every
$H\in(\frac12,1)$, $K\in(1,2)$ with $HK\in(0,1)$ (observe that in
this situation we have $HK\in(\frac12,1)$)
$$E\left(B_t^{H,K}-B_s^{H,K}\right)^2\leq 2^{2-K}|t-s|^{2HK}.$$
 Notice that,
\begin{eqnarray*}
E\left(B_t^{H,K}-B_s^{H,K}\right)^2\nonumber
&=&\frac1{2^{K}}\left[(2t^{2H})^{K}+(2s^{2H})^{K}\right.\nonumber\\&&\qquad\qquad\qquad\qquad
\left.-2\left((t^{2H}+s^{2H})^{K}-|t-s|^{2HK}\right)\right]\nonumber\\
&=&\frac2{2^K}|t-s|^{2HK}+\left(t^{2HK}+s^{2HK}-\frac2{2^K}(t^{2H}+s^{2H})^K\right).\label{conv}
\end{eqnarray*}
Hence it is enough to prove that
$$t^{2HK}+s^{2HK}-\frac2{2^K}(t^{2H}+s^{2H})^K\leq 2^{1-K}|t-s|^{2HK},$$
or equivalently
$$t^{2HK}+s^{2HK}\leq 2^{1-K}\left((t^{2H}+s^{2H})^K+|t-s|^{2HK}\right).$$
From now on we will assume, bethought loss of generality, that
$s\leq t$. Dividing by $t^{2HK}$ we obtain that we have to prove
that
$$1+\left(\frac s t\right)^{2HK}\leq 2^{1-K}\left(\left(1+\left(\frac s t\right)^{2H}\right)^K+\left(1-\frac s t\right)^{2HK}\right).$$
Equivalently we have to prove that, for any $u\in(0,1]$ the function
$$f(u):=2^{1-K}\left[(1+u^{2H})^K+(1-u)^{2HK} \right]-u^{2HK}-1$$
is positive.\\
Observe that $f(1)=0$, so, it is enough to see that the derivative
of this function is negative for $u\in(0,1]$. But,
$$f'(u)=2HK2^{1-K}u^{2HK-1}\left[\left(1+\frac1{u^{2H}}\right)^{K-1}-\left(\frac1u-1\right)^{2HK-1}-2^{K-1}\right].$$
To prove that $f'(u)\leq0$ for $u\in(0,1]$ it is enough to see that
the function
$$h(u):=\left(1+\frac1{u^{2H}}\right)^{K-1}-\left(\frac1u-1\right)^{2HK-1}-2^{K-1},$$
is negative for $u\in(0,1]$. But, since $h(1)=0$, it is enough to
prove that its derivative $h'(u)\geq0$ for $u\in(0,1]$. But,
$$
h'(u)=\frac1{u^{2HK}}\left(-2(K-1)H(u^{2H}+1)^{K-2}u^{2H-1}+(1-u)^{2HK-2}(2HK-1)\right).$$
Observe that $u^{2H-1}\leq1$ because $H\in(\frac12,1)$,
$(u^{2H}+1)^{K-2}\leq 1$ and $$(1-u)^{2HK-2}\geq1.$$ So,
$$h'(u)\geq\frac1{u^{2HK}}\left(-2(K-1)H+2HK-1\right)=\frac1{u^{2HK}}(2H-1)\geq0,$$
because $H\geq\frac12$. The prove is now complete.
\end{proof}

\begin{proposition}\label{proprieties}Suppose that $H\in(0,1)$, $K\in(1,2)$ such that $HK\in(0,1)$. The bifBm $B^{H,K}$  has the following properties
\begin{itemize}
\item[i)] $B^{H,K}$ is a self-similar process with index $HK$, i.e.
\[\left(B^{H,K}_{at},t\geq0\right)\stackrel{d}{=}\left(a^{HK}B^{H,K}_{t},t\geq0\right),\quad\mbox{ for each } a>0.\]
\item[ii)] $B^{H,K}$ has the same long-range property of the fBm $B^{HK}$,
i.e. $B^{H,K}$ has  the short-memory for $HK<\frac{1}{2}$ and it has
long-memory for $HK>\frac{1}{2}$.
\item[iii)] $B^{H,K}$ has a $\frac{1}{HK}$-variation equals to $2^{\frac{1-K}{HK}}\lambda t$ with  $\lambda=E(|N|^{\frac{1}{HK}})$ and  $N$ being a standard normal random
variable, i.e.
\begin{eqnarray*}
\sum_{j=1}^{n}\left(B^{H,K}_{{t_{j}^n}}-B^{H,K}_{{t_{j-1}^n}}\right)^{\frac{1}{HK}}\underset{n\rightarrow\infty}{\longrightarrow}
2^{\frac{1-K}{HK}}\lambda t\ \mbox{ in } L^1(\Omega).
\end{eqnarray*} where $ 0 = t_0^n < \ldots< t_n^n = t$ denotes a partition of
$[0,t]$.

\item[iv)] $B^{H,K}$ is not a semimartingale if $2HK\neq 1$.
\end{itemize}
\end{proposition}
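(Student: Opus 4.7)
My unifying tool is the decomposition $B^{H,K}_t = aB^{HK}_t + bX^{H,K}_t$ of Theorem~\ref{bk}, together with the fact (Proposition~\ref{t:covX}) that $X^{H,K}$ has absolutely continuous sample paths, and with $B^{HK}$ and $X^{H,K}$ independent. This reduces each claim to a property of the fractional Brownian motion $B^{HK}$, modulo a ``smooth'' perturbation. Part~(i) is almost immediate: since $B^{H,K}$ is centered Gaussian, self-similarity with index $HK$ reduces to verifying $R^{H,K}(at,as)=a^{2HK}R^{H,K}(t,s)$, which falls out of the explicit covariance after factoring $a^{2H}$ from $(at)^{2H}+(as)^{2H}$ and $a$ from $|at-as|$.

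For part~(ii), I would set $r(n)=E[B^{H,K}_1(B^{H,K}_{n+1}-B^{H,K}_n)]$ and split $r(n)=a^2 r^{HK}(n)+b^2 r^{X}(n)$ using independence. The fBm term satisfies the classical asymptotics $r^{HK}(n)\sim HK(2HK-1)n^{2HK-2}$ and is summable iff $HK<1/2$. The perturbation term $r^X(n)$ is then evaluated through a Taylor expansion of $(1+x^{2H})^K-x^{2HK}$ around $x=n$; the smoothness of $X^{H,K}$ ensures this contribution does not alter the summability dichotomy inherited from $B^{HK}$.

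Part~(iii) is the heart of the argument. Since $X^{H,K}$ is absolutely continuous on $[0,t]$ its total variation is finite, and with $p=1/(HK)>1$,
\[
\sum_{j=1}^{n}\bigl|X^{H,K}_{t_j^n}-X^{H,K}_{t_{j-1}^n}\bigr|^{p}
\le \Bigl(\max_{j}\bigl|X^{H,K}_{t_j^n}-X^{H,K}_{t_{j-1}^n}\bigr|\Bigr)^{p-1}\cdot \mathrm{TV}(X^{H,K};[0,t])\xrightarrow[n\to\infty]{}0
\]
by uniform continuity. Using $\bigl||x+y|^p-|x|^p\bigr|\le p(|x|+|y|)^{p-1}|y|$ and H\"older's inequality, this transfers the $p$-variation of $aB^{HK}$ to $B^{H,K}$, and the classical $p$-variation of fBm ($\lambda t$ for $B^{HK}$) then yields the stated formula after the appropriate scaling.

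Part~(iv) is a direct corollary of (iii). A continuous semimartingale has vanishing $p$-variation for every $p>2$ and infinite $p$-variation for $p<2$ unless of finite variation; both scenarios are excluded when $2HK\ne 1$. Indeed, if $HK<1/2$ then $p=1/(HK)>2$ and (iii) gives a nontrivial positive $p$-variation, while if $HK>1/2$ then $p<2$ and the linearly growing $p$-variation forces the $1$-variation of $B^{H,K}$ to be infinite. The main obstacle in the whole program is (iii): one must carefully pass the $L^1$-convergence of the fBm $p$-variation through the perturbation by $bX^{H,K}$, which amounts to an integrability estimate on the H\"older error; once this is established, (iv) comes for free.
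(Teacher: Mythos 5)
The paper itself offers no argument for this proposition beyond the one-line remark that it is ``straightforward from \cite{RT} and \cite{ET}'', so your choice to give a self-contained derivation from the decomposition $B^{H,K}=aB^{HK}+bX^{H,K}$ of Theorem \ref{bk} is a legitimate, indeed more informative, route, and your outline for (i), (iii) and (iv) is essentially sound: (i) is just homogeneity of the covariance; in (iii), writing $\Delta_j U:=U_{t_j^n}-U_{t_{j-1}^n}$, the bound $\sum_j\bigl|\Delta_j X^{H,K}\bigr|^{p}\le\bigl(\max_j\bigl|\Delta_j X^{H,K}\bigr|\bigr)^{p-1}\mathrm{TV}\bigl(X^{H,K};[0,t]\bigr)$ kills the absolutely continuous part and the transfer inequality plus H\"older passes the fBm $p$-variation to $B^{H,K}$; (iv) then follows by the usual quadratic-variation dichotomy. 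Two caveats on this half. First, your scaling gives $a^{1/(HK)}\lambda t=2^{\frac{1-K}{2HK}}\lambda t$, which is \emph{not} the constant $2^{\frac{1-K}{HK}}\lambda t$ in the statement (your value is the one consistent with $E(B^{H,K}_t-B^{H,K}_s)^2\sim 2^{1-K}|t-s|^{2HK}$ near the diagonal); you cannot simply assert that the scaling ``yields the stated formula'' — the discrepancy must be confronted. Second, the convergence claimed is in $L^1(\Omega)$, so the uniform-integrability step and the $L^1$ statement for the fBm $1/(HK)$-variation are owed, not merely flagged.

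The genuine gap is in (ii). The sentence ``the smoothness of $X^{H,K}$ ensures this contribution does not alter the summability dichotomy'' is not an argument: pathwise regularity of $X^{H,K}$ gives no control whatsoever on the decay of its increment covariances at large lags. Worse, the Taylor expansion you yourself propose refutes the step: since $(1+x^{2H})^{K}-x^{2HK}=Kx^{2H(K-1)}+O\bigl(x^{2H(K-2)}\bigr)$ as $x\to\infty$, one finds
\begin{equation*}
b^{2}\,E\bigl[X^{H,K}_{1}\bigl(X^{H,K}_{n+1}-X^{H,K}_{n}\bigr)\bigr]\;\sim\;\frac{2HK(K-1)}{2^{K}}\,n^{2H(K-1)-1},
\end{equation*}
and for $K\in(1,2)$ with $HK<1$ one has $0<H(K-1)<\tfrac12$, so the exponent $2H(K-1)-1$ lies in $(-1,0)$: this term is positive and \emph{not} summable for any admissible pair $(H,K)$. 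This is precisely where the case $K\in(1,2)$ differs from $K\in(0,1)$, where the same exponent is smaller than $-1$ and the perturbation is harmless (which is why the computation in \cite{RT} yields the dichotomy at $HK=\tfrac12$ there). As written, your proof of (ii) therefore does not go through — if anything, your own expansion shows that, under the summability criterion, the increment covariances of $B^{H,K}$ fail to be absolutely summable for all $HK\in(0,1)$ when $K\in(1,2)$ — so you need either a formulation of the long-range property for which this extra term is irrelevant, or a direct treatment of it, before part (ii) can be claimed.
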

\noindent The proof of the proposition \ref{proprieties} is
straightforward from \cite{RT} and \cite{ET}.

\section{\bf{Space of integrable functions with respect to bifractional
Brownian motion}}

Let us consider $\mathcal E$ the set of simple functions on $[0,T]$.
Generally, if $U:=(U_t,\,t\in[0,T])$ is a continuous, centered
Gaussian process, we denote by $\mathcal H_U$ the Hilbert space
defined as the closure of $\mathcal E$ with respect to the scalar
product
$$\left<{\bf 1}_{[0,t]},{\bf 1}_{[0,s]}\right>_{\mathcal H}=E\left(U_tU_s\right).$$

In the case of the standard Brownian motion $W$, the space $\mathcal
H_W$ is $L^2([0,T])$. On the other hand, for the fractional Brownian
motion $B^H$, the space $\mathcal H_{B^H}$ is the set of
restrictions to the space of test functions $\mathcal D((0,T))$ of
the distributions of $W^{\frac12-H,2}(\mathbb{R})$ with support
contained in $[0,T]$ (see \cite{J}). In the case $H\in(0,\frac12)$
all the elements of the domain are functions, and the space
$\mathcal H_{B^H}$ coincides with the fractional Sobolev space
$I_{0^{+}}^{\frac12-H}(L^2([0,T]))$ (see for instance \cite{DU}),
but in the case $H\in(\frac12,1)$ this space contains distributions
which are not given by any function.

As a direct consequence of Theorem \ref{bk} we have the following
relation between $\mathcal H_{B^H}$, $\mathcal H_{B^{H,K}}$ and
$\mathcal H_{X^{H,K}}$, where $B^{H,K}$ is the bifractional Brownian
motion and $X^{H,K}$ is the process defined in (\ref{e:davidmod}).

\begin{proposition}
Let $H\in(0,1)$ and $K\in(1,2)$ with  $HK\in(0,1)$. Then it holds
that
$$\mathcal H_{X^{H,K}}\cap\mathcal H_{B^{HK}}=\mathcal H_{B^{H,K}} $$
\end{proposition}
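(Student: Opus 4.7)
The plan is to exploit the additive decomposition of Theorem \ref{bk} together with the independence of $B^{HK}$ and $X^{H,K}$ to show that the $\mathcal H_{B^{H,K}}$-norm is, on simple functions, precisely the sum (up to the constants $a^{2}$ and $b^{2}$) of the other two norms; the equality of the three completions then follows.

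First, recall that by Theorem \ref{bk} we have $B^{H,K}=aB^{HK}+bX^{H,K}$, where $B^{HK}$ and $X^{H,K}$ are independent since $X^{H,K}$ is measurable with respect to the Brownian motion $W$ and, by hypothesis, $W$ is independent of $B^{HK}$. Consequently, for all $s,t\geq 0$,
\begin{equation*}
R^{H,K}(t,s)=a^{2}\,\mathrm{Cov}(B^{HK}_{t},B^{HK}_{s})+b^{2}\,\mathrm{Cov}(X^{H,K}_{t},X^{H,K}_{s}).
\end{equation*}
Extending this identity bilinearly to indicator functions of intervals and then to $\mathcal E$, I obtain for every $\varphi,\psi\in\mathcal E$
\begin{equation*}
\langle\varphi,\psi\rangle_{\mathcal H_{B^{H,K}}}=a^{2}\langle\varphi,\psi\rangle_{\mathcal H_{B^{HK}}}+b^{2}\langle\varphi,\psi\rangle_{\mathcal H_{X^{H,K}}},
\end{equation*}
and, in particular, $\|\varphi\|_{\mathcal H_{B^{H,K}}}^{2}=a^{2}\|\varphi\|_{\mathcal H_{B^{HK}}}^{2}+b^{2}\|\varphi\|_{\mathcal H_{X^{H,K}}}^{2}$.

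From this norm identity the two inclusions are routine. If $(\varphi_{n})\subset\mathcal E$ is Cauchy for $\|\cdot\|_{\mathcal H_{B^{H,K}}}$, then, because $a,b>0$, it is simultaneously Cauchy for $\|\cdot\|_{\mathcal H_{B^{HK}}}$ and for $\|\cdot\|_{\mathcal H_{X^{H,K}}}$; its limit therefore belongs to both completions, giving $\mathcal H_{B^{H,K}}\subset\mathcal H_{B^{HK}}\cap\mathcal H_{X^{H,K}}$. Conversely, any element $f$ of the intersection is the limit in $\mathcal H_{B^{HK}}$ of a sequence $(\varphi_{n})\subset\mathcal E$ and the limit in $\mathcal H_{X^{H,K}}$ of a sequence $(\psi_{n})\subset\mathcal E$; using the norm identity twice, one checks that a diagonal sequence (or, more directly, that $(\varphi_{n})$ itself) is Cauchy in $\mathcal H_{B^{H,K}}$, and its limit there coincides with $f$.

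The one delicate point is that when $HK>1/2$ the space $\mathcal H_{B^{HK}}$ contains genuine distributions rather than functions, so the intersection has to be interpreted inside a common ambient space (such as the space of distributions on $(0,T)$ in which all three Hilbert spaces are continuously embedded, as recalled in the discussion following this proposition). Once this ambient identification is fixed, the norm identity above makes the argument entirely elementary; this comparison of norms on $\mathcal E$ is really the only substantive step, and I do not expect any genuine obstacle beyond bookkeeping.
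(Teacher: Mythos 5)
Your overall strategy is exactly the one the paper intends: the paper's entire proof is the one-line remark that the statement is "a direct consequence" of the decomposition $B^{H,K}=aB^{HK}+bX^{H,K}$ of Theorem \ref{bk} into independent summands, and your norm identity $\|\varphi\|_{\mathcal H_{B^{H,K}}}^{2}=a^{2}\|\varphi\|_{\mathcal H_{B^{HK}}}^{2}+b^{2}\|\varphi\|_{\mathcal H_{X^{H,K}}}^{2}$ on $\mathcal E$ is the correct way to make that precise. With the ambient identification you mention, the inclusion $\mathcal H_{B^{H,K}}\subset\mathcal H_{B^{HK}}\cap\mathcal H_{X^{H,K}}$ does follow as you say, since the $B^{H,K}$-norm dominates each of the other two norms on $\mathcal E$.

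The reverse inclusion, however, is where your write-up has a genuine gap. If $f$ lies in the intersection, you take $\varphi_{n}\to f$ in $\mathcal H_{B^{HK}}$ and $\psi_{n}\to f$ in $\mathcal H_{X^{H,K}}$ and assert that a diagonal sequence, "or $(\varphi_{n})$ itself", is Cauchy in $\mathcal H_{B^{H,K}}$. The norm identity gives no control here: convergence of $\varphi_{n}$ to $f$ in the fBm norm says nothing about $\|\varphi_{n}-f\|_{\mathcal H_{X^{H,K}}}$, and for the mixed differences one only gets $\|\varphi_{n}-\psi_{m}\|_{\mathcal H_{B^{H,K}}}^{2}=a^{2}\|\varphi_{n}-\psi_{m}\|_{\mathcal H_{B^{HK}}}^{2}+b^{2}\|\varphi_{n}-\psi_{m}\|_{\mathcal H_{X^{H,K}}}^{2}$, in which each term is small only in the norm for which the corresponding sequence was chosen. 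In other words, what is needed is precisely that $\mathcal E$ be dense in $\mathcal H_{B^{HK}}\cap\mathcal H_{X^{H,K}}$ for the intersection norm $\bigl(a^{2}\|\cdot\|_{\mathcal H_{B^{HK}}}^{2}+b^{2}\|\cdot\|_{\mathcal H_{X^{H,K}}}^{2}\bigr)^{1/2}$, and density of $\mathcal E$ in each space separately does not imply this; intersections of Hilbert spaces of distributions can strictly contain the closure of a common dense subspace. Repairing this step requires some concrete input about the two spaces — e.g. the characterization of $\mathcal H_{B^{HK}}$ as a Sobolev-type space of distributions (Jolis \cite{J}) together with an explicit description of $\mathcal H_{X^{H,K}}$ coming from the smooth covariance of $X^{H,K}$, and an approximation (mollification) procedure bounded in both norms simultaneously. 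To be fair, the paper's own proof is silent on exactly this point, so you have reproduced its argument faithfully; but the sentence claiming $(\varphi_{n})$ itself is Cauchy in $\mathcal H_{B^{H,K}}$ is false in general and should be replaced by such an argument.
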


If we consider the processes appearing in Proposition
\ref{prop_decomposition} we have also the following result:
\begin{proposition}
Let $H\in(0,1)$. For every  $K\in(0,2)$ with  $HK\in(0,1)$ the
following equality holds
$$ \mathcal H_{\xi^{H,K}}\cap\mathcal H_{B^{H,K}}=\mathcal
H_{\widetilde{B}^{H,K}}\cap\mathcal H_{B^{HK}}.$$
\end{proposition}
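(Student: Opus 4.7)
The plan is to derive the identity from the distributional equality in Proposition \ref{prop_decomposition} by invoking the same general principle that gives the previous proposition from Theorem \ref{bk}: if a centered Gaussian process $U$ decomposes as a sum of two \emph{independent} centered Gaussian processes $U=U_1+U_2$, then $\mathcal H_U=\mathcal H_{U_1}\cap\mathcal H_{U_2}$ (as sets, with the norm of the sum equal to the $\ell^2$-sum of the individual norms).

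First I would establish this general fact. On the space $\mathcal E$ of simple functions, independence of $U_1$ and $U_2$ gives $R_U=R_{U_1}+R_{U_2}$, hence
$$\|\phi\|_{\mathcal H_U}^{2}=\|\phi\|_{\mathcal H_{U_1}}^{2}+\|\phi\|_{\mathcal H_{U_2}}^{2},\qquad \phi\in\mathcal E.$$
Therefore a sequence in $\mathcal E$ is Cauchy for $\|\cdot\|_{\mathcal H_U}$ iff it is Cauchy for both $\|\cdot\|_{\mathcal H_{U_i}}$, so passing to the completion yields $\mathcal H_U=\mathcal H_{U_1}\cap\mathcal H_{U_2}$. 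Scaling $U_i$ by a nonzero constant only rescales the norm, so $\mathcal H_{cU_i}=\mathcal H_{U_i}$ for $c\neq0$.

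Next I would apply this to both sides of Proposition \ref{prop_decomposition}. By the independence hypotheses there, the left-hand process $B^{H,K}+\sqrt{2^{1-K}}\,\xi^{K,H}$ is a sum of two independent centered Gaussian processes, so its associated Hilbert space is $\mathcal H_{B^{H,K}}\cap\mathcal H_{\xi^{H,K}}$; likewise the right-hand process $\sqrt{2^{1-K}}\bigl(\widetilde{B}^{H,K}+B^{HK}\bigr)$ has associated Hilbert space $\mathcal H_{\widetilde{B}^{H,K}}\cap\mathcal H_{B^{HK}}$. Since the two processes share the same distribution they share the same covariance function, hence the same Hilbert space $\mathcal H$, and the two intersections coincide.

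The only genuinely delicate point is the one that was implicit in the previous proposition as well: for $HK\in(0,1)$ the spaces $\mathcal H_{B^{H,K}}$, $\mathcal H_{\widetilde{B}^{H,K}}$ and $\mathcal H_{\xi^{H,K}}$ may contain distributions rather than functions (as recalled for $\mathcal H_{B^H}$ with $H\in(\tfrac12,1)$), so "intersection" must be interpreted via a common ambient space, e.g.\ the dual $\mathcal D'((0,T))$ of test functions, into which each completion $\mathcal H_{U_i}$ embeds consistently starting from $\mathcal E$. Once this embedding is fixed the identification is automatic, so this is where I would expect the main technical care but no real new difficulty; the rest is just bookkeeping on Proposition \ref{prop_decomposition}.
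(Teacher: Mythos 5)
Your argument is exactly the paper's: the paper proves this proposition (together with the preceding one) by declaring it a direct consequence of the decomposition into a sum of two independent centered Gaussian processes given in Proposition \ref{prop_decomposition}, which is precisely the principle $\mathcal H_{U_1+U_2}=\mathcal H_{U_1}\cap\mathcal H_{U_2}$ (for independent $U_1,U_2$, with constants only rescaling norms) that you spell out and apply to both sides of the distributional identity. Your version is simply more explicit, including the remark on interpreting the intersection inside a common ambient space, which the paper leaves implicit.
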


\begin{proof}
Both propositions are a direct consequence of the two decompositions
into the sum of two independent processes proved in Theorem \ref{bk}
and Proposition \ref{prop_decomposition}.
\end{proof}
\begin{remark} For the case $K\in(0,1)$ we have the following equality (see \cite{LN})
$$\mathcal H_{B^{HK}}=\mathcal H_{X^{H,K}}\cap\mathcal H_{B^{H,K}}.$$
\end{remark}

\section{Weak convergence towards the bifractional Brownian
motion}

Another direct consequence of the decomposition for the bifractional
Brownian motion with $H\in(0,1)$, $K\in(1,2)$ and $HK\in(0,1)$ is
the following result of convergence in law in the space $\mathcal
C([0,T])$.

Recall that the fractional Brownian motion of Hurst parameter $H\in
(0,1)$ admits an integral representation of the form (see for
instance \cite{AMN})
$$
B_t^{H}=\int_0^t K^{H}(t,s)dW_s,
$$
where $W$ is a standard Brownian motion and the kernel $K^{H}$ is
defined on the set $\{0<s<t\}$
 and given by
\begin{equation}\label{eq2}
K^{H}(t,s)=d_{H} (t-s)^{H-\frac12}+ d_{H}(\frac12-H)\int_s^t
(u-s)^{H-\frac32}\left(1-\left(\frac{s}{u}\right)^{\frac12-H}\right)
du,
\end{equation} with $d_{H}$ the following normalizing constant
 $$ d_{H}=\left(\frac{2H\Gamma(\frac32-H)}{\Gamma(H+\frac12)
 \Gamma(2-2H)}\right)^{\frac12}.$$

\begin{theorem}
Let $H\in(0,1)$ and $K\in(1,2)$ with $HK\in(0,1)$. Consider
$\theta\in(0,\pi)\cup(\pi,2\pi)$ such that if $HK\in(0,\frac{1}{4}]$
then $\theta$ satisfies that $\cos((2i+1)\theta)\neq1$ for all
$i\in\mathbb{N}$ such that
$i\leq\frac{1}{4}\left[\frac{1}{H}\right]$. Set $a=\sqrt{2^{1-K}}$
and $b=\sqrt{\frac{K(K-1)}{2^K\Gamma(2-K)}}$. Define the processes,
\begin{eqnarray*}
{B}^{HK}_{\epsilon}&=&\left\{\frac{2}{\epsilon}\int_0^TK^{HK}(t,s)\sin\left(\theta
N_{\frac{2s}{\epsilon^2}}\right)\mbox{d}s,\quad t\in[0,T]\right\},\\
X^{H,K}_{\epsilon}&=&\left\{\frac{2}{\epsilon}\int_0^\infty(1-e^{-st^{2H}})s^{-\frac{1+K}{2}}\cos\left(\theta
N_{\frac{2s}{\epsilon^2}}\right)\mbox{d}s,\quad t\in[0,T]\right\},
\end{eqnarray*}
where $K^{HK}(t,s)$ is the kernel defined in (\ref{eq2}). Then,
$$\{Y^H_\epsilon(t)=aB^{HK}_\epsilon(t)+bX^{H,K}_\epsilon(t),
t\in[0,T]\}$$ weakly converges in $\mathcal C([0,T])$ to a
bifractional Brownian motion.
\end{theorem}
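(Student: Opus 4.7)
The plan is to reduce the result to the decomposition in Theorem \ref{bk}, which already identifies $aB^{HK}+bX^{H,K}$ (with $B^{HK}$ independent of the Brownian motion $W$ driving $X^{H,K}$) as a bifractional Brownian motion. Thus it suffices to prove that the pair $(B^{HK}_{\epsilon},X^{H,K}_{\epsilon})$ converges weakly in $\mathcal C([0,T])\times\mathcal C([0,T])$ to a pair $(B^{HK},X^{H,K})$ of independent processes with the correct marginal laws. Then the continuity of the map $(f,g)\mapsto af+bg$ gives the weak convergence of $Y^H_\epsilon$ to $B^{H,K}$.

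The approach for each marginal is a standard oscillatory-integral approximation of a centered Gaussian process by functionals of a Poisson process $N$. For $B^{HK}_{\epsilon}$, this is the Stroock-type/Bardina construction via the Molchan--Volterra kernel $K^{HK}(t,s)$ together with $\frac{2}{\epsilon}\sin(\theta N_{2s/\epsilon^2})$, whose weak convergence to a standard Brownian motion in $L^2([0,T])$ (when $\cos\theta\ne 1$) is classical; the fBm limit then follows by continuity of the Wiener integral against $K^{HK}$. For $X^{H,K}_\epsilon$, the analogous result using $\cos(\theta N_{2s/\epsilon^2})$ in place of the driving Brownian motion in \eqref{e:david} is the content of \cite{BB}, exploiting again that $\frac{2}{\epsilon}\cos(\theta N_{2s/\epsilon^2})$ converges weakly to a Brownian motion (here the hypothesis on $\theta$ when $HK\in(0,1/4]$ enters, to guarantee convergence of the relevant higher-order moments of the oscillatory integral coming from the $H$-parametrization $t^{2H}$). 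Thus the two marginal convergences are essentially already in the literature cited in the paper.

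The core new step, and the main obstacle, is the \emph{joint} convergence with asymptotic independence between the $\sin$- and $\cos$-components, which both depend on the \emph{same} Poisson process $N$. I would prove this by the method of moments/characteristic functions: for any $\lambda,\mu\in\mathbb R$ and any finite set of times $t_1,\ldots,t_n$, write the test functional
\[
\sum_{i=1}^n\lambda_i B^{HK}_{\epsilon}(t_i)+\sum_{i=1}^n\mu_i X^{H,K}_{\epsilon}(t_i)=\frac{2}{\epsilon}\int_0^\infty \bigl(\varphi(s)\sin(\theta N_{2s/\epsilon^2})+\psi(s)\cos(\theta N_{2s/\epsilon^2})\bigr)\,ds,
\]
with suitable deterministic kernels $\varphi,\psi\in L^2$. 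The key computation is that the cross moments of $\sin(\theta N_u)\cos(\theta N_v)$ and of higher-order mixed products, after the $\epsilon^{-1}$ rescaling, produce in the limit only the pure $\sin$--$\sin$ and $\cos$--$\cos$ correlations, i.e. the cross term
\[
E\!\left[\frac{2}{\epsilon}\sin(\theta N_{2u/\epsilon^2})\cdot\frac{2}{\epsilon}\cos(\theta N_{2v/\epsilon^2})\right]\longrightarrow 0,
\]
uniformly enough to conclude joint Gaussianity with diagonal covariance. This is where the condition $\theta\in(0,\pi)\cup(\pi,2\pi)$ and the extra moment condition when $HK\in(0,1/4]$ are needed. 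Finite-dimensional convergence plus tightness (obtained from the increment bounds of the previous proposition together with a Kolmogorov-type criterion applied to $Y^H_\epsilon$) then gives weak convergence in $\mathcal C([0,T])$ to the process $aB^{HK}+bX^{H,K}$, which by Theorem \ref{bk} is a bifractional Brownian motion.
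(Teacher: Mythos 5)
Your proposal is correct and follows essentially the paper's own route: reduce to the decomposition of Theorem \ref{bk} and obtain the two marginal convergences from \cite{BB} (Theorems 3.2 and 3.5), the only difference being that the joint convergence with independent limit laws, which you propose to re-derive by a method-of-moments computation on the mixed sine--cosine functionals of the same Poisson process, is in the paper simply quoted as Theorem 2.1 of \cite{BB}. One small correction: tightness of $Y^H_\epsilon$ should be taken from the tightness of $B^{HK}_\epsilon$ and $X^{H,K}_\epsilon$ already contained in those cited functional convergence results, not from a Kolmogorov criterion based on the increment bounds of the limiting process $B^{H,K}$, since bounds on the limit say nothing about the approximating family uniformly in $\epsilon$.
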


\begin{proof}
Applying Theorems 3.2 and 3.5 of \cite{BB} we know that,
respectively, the processes  ${B}^{HK}_\epsilon$ and
$X_\epsilon^{H,K}$ converge in law in $\mathcal C([0,T])$ towards a
fBm  $B^{HK}$ and to the process $X^{H,K}$. Moreover, applying
Theorem 2.1 of \cite{BB}, we know that the limit laws are
independent. Hence, we are under the hypothesis of the decomposition
obtained in Theorem \ref{bk}, which proves the stated result.
\end{proof}

\begin{remark}
Obviously we can also obtain the same result interchanging the roles
of the sinus and the cosinus functions in the definition of the
approximating processes.
\end{remark}

\par\bigskip\noindent
{\bf Acknowledgment.}  The authors would like to thank the editor Hui-Hsiung Kuo  and  referees for the valuable comments.

\bibliographystyle{amsplain}

\begin{thebibliography}{99}


\bibitem{AMN} Al\`os, E., Mazet, O. and Nualart, D.: Stochastic calculus
with respect to Gaussian processes, {\em Ann. Probab.} {\bf 29} (2001) 766--801.


\bibitem{BB} Bardina, X. and Bascompte, D.:  Weak convergence towards two
independent Gaussian processes from a unique Poisson process,
{\em Collect. Math.} {\bf 61}  (2010) 191--204.

\bibitem{BGT}
 Bojdecki, T., Gorostiza, L.G. and Talarczyk, A.:  Sub-fractional
Brownian motion and its relation to occupation times,  {\em Stat. Prob.
Lett.} {\bf 69}   (2004)  405--419.

\bibitem{BGT1}Bojdecki, T., Gorostiza, L.G. and Talarczyk, A.:  Some
Extensions of Fractional Brownian Motion and Sub-Fractional Brownian
Motion Related to Particle Systems, {\em  Electron. Comm. Probab.} {\bf 12}
(2007) 161--172.

\bibitem{DU} Decreusefond, L. and \"Ust\"unel, A.S.: Stochastic analysis of the fractional Brownian
motion, {\em Potential Anal.} {\bf 10}  (1999)  177--214.

\bibitem{ET}
 Es-Sebaiy, K. and Tudor, C.: Multidimensional bifractional
Brownian motion: It\^o and Tanaka's formulas, {\em Stoch. Dyn.} {\bf 7}
(2007)  365--388.

\bibitem{HV}  Houdr\'e, C. and Villa, J.:   An Example of Infinite Dimensional
Quasi-Helix, {\em Contemp. Math. (Amer. Math. Soc.)} {\bf 336}  (2003)
195-201.

\bibitem{J} Jolis, M.: On the Wiener integral with respect to the fractional Brownian motion on an interval,
{\em J. Math. Anal. Appl.} {\bf 330}  (2007)  1115--1127.

\bibitem{KRT}  Kruk, I., Russo, F. and Tudor, C.A.: Wiener
integrals, Malliavin calculus and covariance structure measure, {\em J.
Funct. anal.} {\bf 249} (2007)  92--142.


\bibitem{LN} Lei, P. and Nualart, D.: A decomposition of the bi-fractional Brownian
motion and some applications, {\em Statist. Probab. Lett.} {\bf 79}
(2009)  619--624.

\bibitem{RCT} Ruiz de Ch\'avez, J. and Tudor, C.: A decomposition of
sub-fractional Brownian motion, {\em Math. Reports} {\bf 61} (2009)
67--74.

\bibitem{RT}
Russo, F. and Tudor, C.: On the bifractional Brownian
motion, {\em Stoch. Process. Appl.} {\bf 116}  (2006)   830-856.



\end{thebibliography}

\end{document}